\documentclass[11pt]{article}
\usepackage[utf8]{inputenc}
\usepackage{mainsty}
\usepackage{geometry}[margin = 1in, top = 1in, bottom = 1in]
\usepackage{authblk}

\title{A Linear Variance Bound for Ballistic Deposition}
\author{Timothy Sudijono\footnote{Email: \url{tsudijon@wharton.upenn.edu}}}
\date{\today}

\begin{document}
\maketitle

\begin{abstract}
We show the height at time $t$ of the standard continuous-time ballistic deposition process has variance at most $C_d t, t \geq 1$ in every dimension $d \geq 1$. The crucial idea is to adapt a martingale argument due to Aldous, which gives tail bounds on first hitting times for monotone surface growth processes. Inverting these tail bounds and applying a duality theorem due to Penrose yields a linear variance bound for the height of ballistic deposition.
\end{abstract}



\section{Introduction}

The ballistic deposition process is a model of continuous-time surface growth widely studied in the physics literature \cite{vold1959numerical,family1985scaling,meakin1986ballistic,jullien1987simple,liu1988universality,schaaf2000random}. Consider the standard mathematical version of the model, defined as follows. Assign i.i.d. Poisson clocks over each site $x$ in the lattice $\bb{Z}^d$, which ring at rate $1$. When a clock rings at site $x$ and time $t$, the height of the surface $f(t,x)$ is updated to
\begin{equation}
\label{eq:ballistic_deposition_update}
\max\set{f(t^-,x)+1, \max_{y \sim x} f(t^-,y)},
\end{equation}
where $y \sim x$ denotes $\norm{y - x}_1 = 1$ and $t^-$ is a time infinitesimally before $t.$ The initial surface is flat.

Rigorous results concerning the exact model \eqref{eq:ballistic_deposition_update} are scant, even when $d = 1$. Sepp\"{a}l\"{a}inen \cite{seppalainen2000strong} proves a strong law of large numbers for the process. Penrose and Yukich \cite{penrose2001mathematics,penrose2002limit} establish central limit theorems on the process, while Penrose \cite{penrose2008growth} establishes the best known bounds on the size of surface height fluctuations, to be discussed shortly. Finally, Chatterjee \cite{chatterjee2021existence} demonstrates the existence of a stationary measure for the gradient process and conjectures the existence of multiple stationary measures.

For variants of the model, there has been some progress. Atar et al. \cite{atar2001ballistic} study the process on a one-dimensional strip, with extensions to general graphs analyzed in follow-up work \cite{braun2020growth, mansour2019ballistic}. Chatterjee \cite{chatterjee2023superconcentration} shows a variance bound of order $t/\log t$ for a synchronous version of the model driven by Gaussian noise. Comets et al. \cite{comets2022scaling} establish scaling limits for a variant of the model with heavy tailed block sizes and a general probability $p$ of sticking to the first point of contact. Groisman et al. \cite{groisman2025gue} demonstrate Tracy-Widom height fluctuations for the variant with one-sided sticking, near the axis.

One fundamental question on the model \eqref{eq:ballistic_deposition_update} concerns the order of fluctuations of the surface height at the origin, $f(t,0).$ It is conjectured that the model falls into the KPZ universality class \cite{kardar1986dynamic} and so $\Var f(t,0)$ is expected to scale as $t^{2/3}$ when $d = 1.$ Thus far, the only rigorous results known are that $\Var f(t,0) = \Omega(\log t)$ in one dimension and $\Var f(t,0) = o(t^2)$ in all dimensions, both due to Penrose \cite{penrose2008growth}.   

In this note, we establish an $O(t)$ upper bound on the variance, in all dimensions. 

\begin{theorem}
\label{thm:linear_variance_bound}
For each $d \geq 1$, there are constants $c_d,C_d \in (0,\infty)$ such that for every $t \geq 1, r \geq 0,$
\begin{equation}
\label{eq:bernstein_type_bound_on_height}
    \Prob\left(|f(t,0) - \E f(t,0)| \geq r \right) \leq C_d \exp\left\{ -c_d \frac{r^2}{t + r} \right\}.
\end{equation}
As a result,
\begin{equation}
\label{eq:bd_linear_variance}
    \Var f(t,0) \leq C_d t, \qquad t \geq 1.
\end{equation}
\end{theorem}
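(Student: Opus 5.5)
The plan follows the abstract: first prove concentration for first hitting times of the dual growth process, then invert to heights. I recall Penrose's duality for ballistic deposition. Run the clocks backward in time, so that the height satisfies $f(t,0)\ge n$ iff some ``backward'' path reaches level $n$ within time $t$. Equivalently, define a monotone random set growth process in which $T_n$, the first time height $n$ appears at the origin, is the first hitting time of a level in a growth process of sets. Concretely, I would write
\[
\{f(t,0) \ge n\} = \{T_n \le t\},
\]
where $T_n$ admits a representation as a first passage time of a monotone surface-growth process started from a finite seed: a Markov process on finite subsets of $\mathbb{Z}^d\times\mathbb{Z}_{\ge0}$ whose transitions occur at bounded total rate per unit of ``progress''. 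By translation invariance and the Markov property, $T_{n+m}$ is dominated by $T_n$ plus an independent copy of $T_m$ up to a shift. That is, the process is subadditive, with the restart performed from the worst-case configuration at level $n$.

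The key step is Aldous's martingale argument. Let $\mathcal{F}_s$ be the natural filtration and set $M_s=\E[T_n\mid\mathcal{F}_s]$. This is a martingale whose jumps occur only at clock rings in a region of bounded size near the current ``front''. Each jump changes the conditional expectation of the remaining time by at most a constant $K_d$. This holds because a single ring advances the state monotonically, and by monotonicity and subadditivity the remaining expected hitting time from any reachable configuration lies between that from the best and from the worst local configuration. Those two differ by $O(1)$, since from the worst configuration one reaches the best one in $O(1)$ expected time with exponential tails. The predictable quadratic variation up to time $T_n$ is then at most $K_d^2\cdot(\text{rate})\cdot T_n$. A Freedman/Bernstein inequality for martingales with bounded jumps gives
\[
\Prob(|T_n-\E T_n|\ge r)\le C\exp\{-c\,r^2/(\E T_n + r)\}.
\]
Here $T_n$ inside the variance term is handled by intersecting with $\{T_n\le 2\E T_n + r\}$, whose complement has an exponential tail by a crude domination (each level increase needs at most $O(1)$ geometric clock waits). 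Since $\E T_n\asymp n$ by Seppäläinen's law of large numbers and superadditivity, this gives a Bernstein-type tail for $T_n$ at scale $n$.

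Inversion comes next. Let $\mu$ be the time constant, with $\E T_n = n/\mu + o(n)$. For the upper tail, note that $\{f(t,0)\ge \E f(t,0)+r\}=\{T_n\le t\}$ with $n=\E f(t,0)+r$. Since $\E T_n \ge t + cr$ (using $\E T_{n+m}\ge \E T_n + c m$, from each level requiring at least one ring at the origin, so $c$ is positive), the event is a deviation of size $cr$, with probability $\le C\exp\{-c r^2/(t+r)\}$. The lower tail is symmetric using $\{f<n\}=\{T_n>t\}$ and $\E T_n\le t-cr$. Here one must compare $\E f(t,0)$ with the $n$ for which $\E T_n\approx t$. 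Any $O(\sqrt t)$ mismatch is absorbed because the tails are already Gaussian at scale $\sqrt t$, and the mismatch itself is controlled by integrating the tail bound. The variance bound \eqref{eq:bd_linear_variance} follows by integrating \eqref{eq:bernstein_type_bound_on_height}: $\int_0^\infty 2r\,C e^{-cr^2/(t+r)}dr = O(t)$.

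The main obstacle is the bounded-increment claim for the Doob martingale. I must show that a single ring (or the passage of time) changes $\E[T_n\mid\mathcal{F}_s]$ by $O(1)$ uniformly over the state. This requires the coupling and monotonicity argument comparing expected remaining hitting times from two configurations that differ locally. I would first attempt it via attractiveness of the dual process, bounding the difference by the time to ``repair'' the local discrepancy, which has an exponential tail.
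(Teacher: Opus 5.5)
Your outline (duality, Aldous's remaining-time martingale for $T_n$, Bernstein tails, inversion) matches the paper's. However, both load-bearing steps rest on claims that are false for this process, and the two ideas you are missing are exactly the paper's model-specific inputs. (A minor point: the duality holds only in law, $f(t,0)\stackrel{(d)}{=}H_t$, where $H_t$ is the supremum over \emph{all} sites of the dual next-arrival process; that suffices for your purposes.)

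\textbf{First gap: the quadratic variation.} In a finite box $\Lambda$, $M_s=s\wedge T_n+\mu_n(\eta_{s\wedge T_n})$, and a ring at $x$ produces the jump $-\delta_x(\eta)$, where $\delta_x(f)=\mu_n(f)-\mu_n(U_xf)\ge 0$. This jump is in general nonzero for a ring at \emph{any} site of the support $S_s$, not only in a bounded region near the front. The support has size of order $s^d$, so ``jump bound $K_d$ times rate'' gives $K_d^2\int_0^{T_n}|S_s|\,ds$, which is useless. The missing idea is the first-step identity $\mu_n(f)=|\Lambda|^{-1}\bigl(1+\sum_{x\in\Lambda}\mu_n(U_xf)\bigr)$, i.e.\ $\sum_x\delta_x(f)=1$ with all $\delta_x\ge 0$. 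Equivalently, the unit drift of $M$ must be exactly compensated by its downward jumps. This gives $0\le\delta_x\le 1$ and $\sum_x\delta_x^2\le 1$ at once, hence $\langle M\rangle_t\le t\wedge T_n$ no matter how large the support is. The exponential supermartingales then follow from $e^{-u}-1+u\le u^2/2$ and convexity of $e^u-u-1$, and no repair coupling is needed.

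\textbf{Second gap: the inversion.} You justify the spacing bound $\E T_{n+m}\ge \E T_n+cm$ by saying each level requires a ring at the origin; this is false. In the original model a single ring at the origin can raise $f(t,0)$ by many levels. In the dual, $H$ rises by one whenever \emph{any} maximizing site rings, and the number of such sites is not controlled a priori. So your argument gives no constant lower bound on $\mu_{n+1}-\mu_n$. Subadditivity bounds $\mu_{n+m}-\mu_n$ only from above, which is the wrong direction, and the LLN's $o(n)$ error is far too coarse.

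What the paper proves is $\mu_{n+k}-\mu_n\ge c_d(k-L_n)$ with $L_n=\log(2C_d)+d\log(n+1)$. The argument runs as follows.
\begin{enumerate}
\item Since $H$ moves in unit steps, $\eta_{T_n}\le n$ pointwise, so raising every site of $S_{T_n}$ to height $n$ only speeds the process up.
\item Gaining $k$ more levels within time $s$ then requires a path from one of the $|S_{T_n}|$ sites through $k$ rings. This has probability at most $|S_{T_n}|e^{e(2d+1)s-k}$.
\item Finally, $\E|S_{T_n}|\le C(n+1)^d$ by the path bound applied to the radius $R_{T_n}$.
\end{enumerate}
The logarithmic loss is harmless, because the inversion is only needed for $r\ge C\log(t+2)$. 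You also need $\mu_{n+1}-\mu_n\le 1$ to locate $m$ with $t\le\mu_m<t+1$. The paper centers the tail bounds at $m$ and only then replaces $m$ by $\E H_t$, using $\E|H_t-m|\le C\sqrt t$.
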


Ballistic deposition admits a last passage percolation-type representation in terms of maximal weights of lattice paths \cite{penrose2008growth}. Given such a representation, the typical strategy is to apply bounded differences or a related concentration inequality, as in \cite{sudijono2025fluctuation}. However, the approach proves unfruitful. The effect of adding an update on future surface heights is generally unbounded and appears hard to study. 

Instead, Theorem \ref{thm:linear_variance_bound} uses a different strategy, which may prove useful in analyzing variants of the model. The starting point is to use a duality result established by Penrose \cite{penrose2008growth}. Let $\xi^0(t,x)$ denote the height of a dual ballistic deposition process started from the initial configuration 
\begin{equation}
\label{eq:seed_initial_configuration}
\xi^0(0,x) = 
\begin{cases}
0, & \quad x = 0 \\
-\infty, & \quad x \neq 0.
\end{cases}
\end{equation}
Let $\eta^0(t,x)$ denote the dual next-arrival height process, which represents the new surface height at $(t,x)$ upon a hypothetical clock ring at $(t,x)$:
\begin{equation}
\label{eq:next_arrival_surface}
    \eta^0(t,x) = \max\left\{\xi^0(t,x) + 1, \max_{y \sim x} \xi^0(t,y)\right\}.
\end{equation}


Let $H_t^0 := \sup_{x \in \bb{Z}^d} \eta^0(t,x)$ denote the maximum of the dual next-arrival process.  Then, \cite[Theorem 3]{penrose2008growth} shows that the height of ballistic deposition started from a flat profile is equal in distribution to a delayed next-arrival process:
\begin{equation}
\label{eq:duality_penrose}
    f(t,0) \stackrel{(d)}{=} 
    \begin{cases}
    0, & \quad t < E \\
    H^0_{t - E}, & \quad t \geq E,
    \end{cases}
\end{equation}
for an independent exponentially-distributed time $E$ with rate $1.$ 

A helpful simplification comes by noticing that $\eta^0(t,x)$ is Markov, evolving according to the following dynamics. At a clock ring $(t,x),$
\begin{equation}
\label{eq:next_arrival_height_autonomous_dynamics}
\begin{split}
& \eta^0(t,x) = \eta^0(t^-,x) + 1 \\
    & \eta^0(t,y) = \max\set{\eta^0(t^-,x), \eta^0(t^-,y)}, \qquad y \sim x.
\end{split}
\end{equation}
The height remains unchanged at all other sites. This is true for any initial configuration of the underlying ballistic deposition. Moreover, the delayed process on the right hand side of \eqref{eq:duality_penrose} turns out to be equal in distribution to  $H_t := \sup_{x \in \bb{Z}^d} \eta(t,x)$, for a next-arrival process $\eta$ evolving autonomously according to \eqref{eq:next_arrival_height_autonomous_dynamics}, started from the seed initial configuration $\eta(0,x) = 0, x = 0$ and $-\infty$ otherwise. Therefore, we can re-express Penrose's duality result as
\begin{equation}
\label{eq:duality_reexpressed}
f(t,0) \stackrel{(d)}{=} H_t.
\end{equation}

The crucial observation is that monotonicity of the next-arrival process $\eta$ implies concentration for hitting times of its maximum, by an argument of Aldous \cite{aldous2016weak}. Let $T_n$ denote the first hitting time $T_n = \inf\set{ t \geq 0: H_t \geq n}$ and consider the remaining-time martingale
\[
M_t = \E[T_n \mid \cl{F}_t] - \E T_n,
\]
after localizing appropriately to a finite box. Using only the monotonicity and the surface-growth structure of $\eta$, we will show that the following exponential transformations of $M_t$ are supermartingales by inspecting the generator of $\eta$.
\begin{align}
    & \exp\set{\lambda M_t - \lambda^2(t \wedge T_n)/2}, \quad \lambda \geq 0 \\
    & \exp\set{-\theta M_t - (e^\theta - 1 - \theta)(t \wedge T_n)}, \quad \theta \geq 0.
\end{align}

With $\mu_n = \E T_n,$ the standard Chernoff trick yields for all $r \geq 0$,
\begin{equation}
\Prob(T_n \geq \mu_n + r) \leq \exp\set{-\frac{r^2}{2(\mu_n + r)}}.
\end{equation}
For $0 \leq r \leq \mu_n$, we also obtain the lower tail estimate
\begin{equation}
\Prob(T_n \leq \mu_n - r) \leq \exp\set{-\frac{r^2}{4\mu_n}}. 
\end{equation}
Finally, we may transfer these concentration estimates for the maximum process $H_t$ using the simple relation $\Prob(T_n \leq t) = \Prob(H_t \geq n)$, for all $n \geq 1, t \geq 0$. This inversion argument appears in \cite{penrose2008growth,sudijono2025fluctuation} when establishing variance lower bounds on surface height. Here, the important part is to ensure that the sequence of hitting time means $(\mu_n)_{n \geq 1}$ is not too ``flat", requiring some careful lower bounds on the mean differences $\mu_{n+k} - \mu_n$ for general $n,k \geq 0$. The structure of the dual process plays a role in this step. The remainder of this article gives the rigorous proof of Theorem \ref{thm:linear_variance_bound}.

\section{Preliminaries}

We fix the following notation. Locations $x,y \in \bb{Z}^d$ will often be referred to as sites, with $y \sim x$ denoting $\norm{x - y}_1 = 1$. We will let $\cl{N}_x$ denote the set of its strict neighbors $\set{y:y \sim x}.$ Clock rings over a site $x$ at time $t$ will be identified by the pair $(t,x)$. Occasionally, these will be called updates. When discussing surface growth processes $f(t,x)$, we will often write $f_t$ to denote the function $f_t(x) = f(t,x)$ on $\bb{Z}^d$.

Before embarking on the study of the next-arrival process, we derive some of its relationships with ballistic deposition. Let $\xi(t,x)$ denote the surface height of a ballistic deposition process started from any initial configuration, for now. Let $\eta(t,x)$ be its next-arrival process, defined by \eqref{eq:next_arrival_surface}. As mentioned, $\eta_t$ is Markov. 
\begin{proposition}
\label{prop:next_arrival_markov}
The next-arrival height process $\eta(t,x)$ for a ballistic deposition process started at any initial configuration evolves according to Equation \eqref{eq:next_arrival_height_autonomous_dynamics}. 
\end{proposition}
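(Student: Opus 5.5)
The plan is a direct check. Fix a clock ring $(t,x)$ of the underlying ballistic deposition $\xi$ and compare $\eta_{t^-}$ with $\eta_t$ site by site, using only the definition \eqref{eq:next_arrival_surface} and the update rule \eqref{eq:ballistic_deposition_update}. The first observation is that \eqref{eq:ballistic_deposition_update} says exactly $\xi(t,x) = \eta(t^-,x)$, while $\xi(t,z)=\xi(t^-,z)$ for all $z \neq x$. Since $\eta(t,z)$ depends only on $\xi_t$ restricted to $\{z\}\cup\mathcal{N}_z$, the value $\eta(t,z)$ is unchanged unless $z = x$ or $z \sim x$. This gives the claim that heights are unchanged at all other sites. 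Throughout we use the conventions $-\infty+1=-\infty$ and $\max\{-\infty,a\}=a$, so the seed configuration \eqref{eq:seed_initial_configuration} causes no trouble.

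Next we treat the site $x$ itself. We have
\[
\eta(t,x) = \max\Big\{\eta(t^-,x)+1,\ \max_{y\sim x}\xi(t^-,y)\Big\}.
\]
By definition $\eta(t^-,x) \ge \max_{y\sim x}\xi(t^-,y)$, so the first term dominates and $\eta(t,x)=\eta(t^-,x)+1$. If $\eta(t^-,x)=-\infty$, then both sides equal $-\infty$.

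For a neighbor $y \sim x$ we have
\[
\eta(t,y) = \max\Big\{\xi(t^-,y)+1,\ \eta(t^-,x),\ \max_{z\sim y,\, z\neq x}\xi(t^-,z)\Big\}.
\]
It suffices to show this equals $\max\{\eta(t^-,x),\eta(t^-,y)\}$, where
\[
\eta(t^-,y)=\max\Big\{\xi(t^-,y)+1,\ \xi(t^-,x),\ \max_{z\sim y,\, z\ne x}\xi(t^-,z)\Big\}.
\]
The two expressions differ only in that the term $\xi(t^-,x)$ is replaced by $\eta(t^-,x)$. Since $\eta(t^-,x)\ge \xi(t^-,x)+1>\xi(t^-,x)$, adding $\eta(t^-,x)$ into the max makes the $\xi(t^-,x)$ term redundant, and the two sides agree.

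Finally, the pathwise identity yields the Markov property. Almost surely no two clocks ring simultaneously and only finitely many relevant rings occur near any site in finite time; for infinite-volume well-definedness we rely on the standard graphical construction already assumed for $\xi$. The evolution of $\eta$ is therefore a deterministic function of $\eta_0$ and the Poisson clocks, given by \eqref{eq:next_arrival_height_autonomous_dynamics}, and hence $\eta$ is Markov for any initial configuration. The main obstacle is modest: one must make sure that the claimed dominance $\eta(t^-,x)\ge\xi(t^-,x)+1$ is exactly what makes the neighbor update collapse to a max. The $-\infty$ conventions are a secondary bookkeeping point.
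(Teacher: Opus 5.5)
Your proof is correct and is essentially the paper's argument: $\xi(t,x)=\eta(t^-,x)$ together with $\eta(t^-,x)\ge\max_{y\sim x}\xi(t^-,y)$ gives $\eta(t,x)=\eta(t^-,x)+1$, and for $y\sim x$ replacing $\xi(t^-,x)$ by $\eta(t^-,x)$ collapses to $\max\{\eta(t^-,x),\eta(t^-,y)\}$ since $\eta(t^-,x)\ge\xi(t^-,x)$; your locality and Markov remarks are harmless additions. One small nit: only the weak inequality $\eta(t^-,x)\ge\xi(t^-,x)$ is needed, and your strict version $\xi(t^-,x)+1>\xi(t^-,x)$ fails under your own $-\infty$ convention when $\xi(t^-,x)=-\infty$.
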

\begin{proof}
Upon a clock ring $(t,x),$ the surface height $\xi(t,x)$ is set to $\eta(t^-,x)$, by definition. Heights at other sites remain the same. Thus,
\[
\eta(t,x) = \max\set{\eta(t^-,x)+1, \max_{y \sim x} \xi(t^-,y)}.
\]
But $\eta(t^-,x) \geq \max_{y \sim x} \xi(t^-,y)$, which shows $\eta(t,x) = \eta(t^-,x) + 1.$

Upon the same clock ring $(t,x)$, we now calculate the effect on surface height at an adjacent site $y \sim x$. By \eqref{eq:next_arrival_surface} again,
\begin{align*}
\eta(t,y) = \max\set{\xi(t^-,y)+1, \ \max_{\substack{z \sim y \\ z \neq x}} \xi(t^-,z), \ \eta(t^-,x) }.
\end{align*}
Because $\eta(t^-,x) \geq \xi(t^-,x)$, we also have
\begin{align*}
\eta(t,y) & = \max\set{\xi(t^-,y)+1, \ \max_{z \sim y} \xi(t^-,z), \ \eta(t^-,x) } \\
          & = \max\set{\eta(t^-,y), \eta(t^-,x)}.
\end{align*}
\end{proof}

As in the introduction, let $\xi^0(t,x)$ be a ballistic deposition process started from the seed initial configuration \eqref{eq:seed_initial_configuration}, $\eta^0(t,x)$ its next-arrival process, and $H_t^0$ be the supremum of $\eta_t^0.$ Meanwhile, let $H_t := \sup_{x \in \bb{Z}^d} \eta(t,x)$, for $\eta$ evolving according to \eqref{eq:next_arrival_height_autonomous_dynamics}, started from the profile $\eta(0,x) = 0, x = 0$ and $-\infty$ otherwise.

\begin{lemma}[Delayed dual identity]
\label{lemma:delayed_seed_identity}
Let $E$ be exponential with rate $1$, independently of $\eta^0$. Let $\widehat H^0_t$ be the delayed process
\[
\widehat H^0_t =
\begin{cases}
    0, & \quad t < E \\
    H^0_{t - E}, & \quad t \geq E,
\end{cases}
\]
Then $(H_t)_{t \geq 0} \stackrel{(d)}{=} (\widehat H^0_t)_{t \geq 0}$ as processes.
\end{lemma}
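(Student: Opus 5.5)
We will compare the two processes through their initial states. Start the dual ballistic deposition $\xi^0$ from the seed configuration \eqref{eq:seed_initial_configuration}. Its next-arrival profile at time $0$ is
\[
\eta^0(0,0) = 1,\qquad \eta^0(0,y) = 0 \text{ for } y \sim 0,\qquad \eta^0(0,x) = -\infty \text{ otherwise}.
\]
By Proposition~\ref{prop:next_arrival_markov}, $\eta^0$ is a Markov process driven by the dynamics \eqref{eq:next_arrival_height_autonomous_dynamics}, the same dynamics that drive $\eta$. The process $\eta$ starts from the profile $\delta$ equal to $0$ at the origin and $-\infty$ elsewhere.

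We run $\eta$ with the same Poisson clocks. Before the first ring at the origin, call its time $E$, rings at other sites do not affect $\eta$:
\begin{itemize}
\item a ring at a site $x$ with $\eta(x)=-\infty$ leaves it at $-\infty$;
\item neighbors only take maxima with $-\infty$.
\end{itemize}
Hence $\eta_t = \delta$ and $H_t = 0$ for $t<E$. At time $E$ the dynamics \eqref{eq:next_arrival_height_autonomous_dynamics} set $\eta(E,0)=1$ and $\eta(E,y)=\max\{0,-\infty\}=0$ for $y\sim 0$, while every other site stays at $-\infty$. So $\eta_E$ equals $\eta^0(0,\cdot)$ exactly. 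Here $E$ is a rate-one exponential, since it is the first ring of the clock at $0$.

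By the strong Markov property of the Poisson clock system at the stopping time $E$, the clocks after $E$ are a fresh copy of the Poisson system, independent of $\mathcal{F}_E$ and in particular of $E$. Because \eqref{eq:next_arrival_height_autonomous_dynamics} is a deterministic function of the initial profile and the clock rings, $(\eta_{E+s})_{s\ge0}$ has the law of $\eta^0$ and is independent of $E$. Taking suprema gives $(H_{E+s})_{s\ge 0} \stackrel{(d)}{=} (H^0_s)_{s\ge0}$, independent of $E$. Combined with $H_t=0$ on $t<E$, the whole path $(H_t)_{t \geq 0}$ is the image of the pair $(E, (H_{E+s})_s)$ under the same measurable map that builds $\widehat H^0$ from $(E,H^0)$. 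Equality in law of the processes follows.

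The main obstacle is rigor for the infinite-volume dynamics on $\mathbb{Z}^d$, namely that $\eta$ and $\eta^0$ are well defined as functions of the clocks and satisfy the strong Markov property at $E$. We handle this by the fact that from a finitely supported profile, the region with finite values grows only through rings at sites adjacent to it, so it is dominated by a first-passage or Richardson-type growth that is a.s. finite at all times. Then only finitely many rings matter on $[0,t]$, and the construction is a finite-state graphical construction, for which the strong Markov property is standard. A minor check is that the regeneration uses the clock at $0$ alone to define $E$, which is legitimate because nothing changes before $E$.
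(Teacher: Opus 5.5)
Your proposal is correct and follows essentially the same argument as the paper: nothing changes before the first ring $E$ at the origin, $\eta_E$ coincides with $\eta^0(0,\cdot)$, and the strong Markov property of the clocks at $E$ regenerates an independent copy of $\eta^0$. Your extra paragraph on infinite-volume well-posedness goes beyond what the paper does at this point (it handles that later via finite boxes $\Lambda$). One small wording fix there: the support grows through rings at sites \emph{inside} the support that border its complement, not at sites adjacent to it, which does not affect the domination claim.
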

\begin{proof}
Let $E$ be the first clock ring at the origin for $\eta$. Before $E$, every ring is at a site of height $-\infty$ and leaves the profile unchanged. At time $E$, the update rule gives
\[
\eta(E,x) =
\begin{cases}
1, & x=0,\\
0, & x\sim 0,\\
-\infty, & \text{otherwise}.
\end{cases}
\]
This is exactly $\eta^0(0,\cdot)$, as follows from \eqref{eq:seed_initial_configuration} and \eqref{eq:next_arrival_surface}. By the strong Markov property of the independent Poisson clocks, the clocks after $E$, shifted by $E$, are independent of $E$ and have their original law. The autonomous dynamics therefore identify $(\eta_{E+s})_{s \geq 0}$ with an independent copy of $(\eta^0(s,\cdot))_{s\geq0}$ in distribution. Taking maxima proves the claim.
\end{proof}

In particular, this establishes \eqref{eq:duality_reexpressed}.

\section{The Next-Arrival Process and Tail Estimates}

Given Proposition \ref{prop:next_arrival_markov}, we may study the next-arrival process without reference to an underlying ballistic deposition. Let $\Lambda \subset \bb{Z}^d$ be a finite box containing $0$. Let $\eta^\Lambda(t,x)$ denote the surface height of a next-arrival deposition process started from the seed initial configuration
\eqref{eq:seed_initial_configuration}, following the update rule 
\begin{equation}
\begin{split}
& \eta^\Lambda(t,x) = \eta^\Lambda(t^-,x) + 1 \\
    & \eta^\Lambda(t,y) = \max\set{\eta^\Lambda(t^-,x), \eta^\Lambda(t^-,y)}, \qquad y \sim x.
\end{split}
\end{equation}
We will restrict the updates to the finite box $\Lambda$. That is, all clock rings outside of $\Lambda$ are ignored and the update rule for sites on $\partial \Lambda$ ignores neighbors outside of $\Lambda$. Throughout the rest of the paper, we will often hide dependence on $\Lambda$. Since we are interested in the process $\eta^\Lambda$ at a fixed time $t$, $\Lambda$ can be taken suitably large enough such that $\eta^\Lambda_s = \eta^{\bb{Z}^d}_s$ for all $s \leq t$ with probability arbitrarily close to $1.$ Thus boundary handling causes no issues. 

Define the update maps $U_x$ which transform a surface height profile $f \in (\bb{Z}_{\geq 0} \cup \set{-\infty})^\Lambda$ according to the next-arrival dynamics at a clock ring at $x$:
\[
(U_xf)(x) = f(x) + 1, \qquad (U_xf)(y) = \max\set{f(y),f(x)} \quad (y \sim x),
\]
with all other coordinates remaining unchanged. The maps $U_x$ preserve pointwise ordering, in the sense that if $f,g$ are two profiles with $f \geq g$ pointwise, then $U_x f \geq U_x g$ for every $x.$ In addition, we define $H_t = \sup_{x \in \Lambda} \eta(t,x)$ to be the supremum of the next-arrival process, $S_t  = \set{x \in \Lambda: \eta(t,x) > -\infty}$ to be the support of the process, and $R_t  = \max_{x \in S_t} \norm{x}_1$ to be the maximum radius of the support. Notice that the surface height $\eta$ and its support $S_t$ are non-decreasing, and that $H_t$ only ever increases by $1$. \\

Penrose \cite[Lemma 4.1]{penrose2008growth} establishes an important characterization of $\eta$ in terms of maximal weights of paths on a disordered lattice. We will introduce it informally as follows. Consider $\Lambda \times \bb{R}_{\geq 0}$ with all clock rings marked on this graph, as in the right hand side of Figure \ref{fig:ballistic_deposition_path_representation}. A path $\gamma$ ending at $(t,x)$ is a piecewise-constant function $\gamma:[0,t] \rightarrow \Lambda$ with $\gamma(0) = 0, \gamma(t) = x.$ It may be visualized as traveling forwards in time starting at $(0,0)$ and ending at $(t,x)$. At an update $(t',x')$ with $t' \leq t$, it may jump laterally to a neighbor $y \sim x'$, or it may pass through the update and remain at $x'$. Let $W(\gamma)$ be the number of updates $\gamma$ passes through, which we will refer to as the \textit{weight} of the path. Updates for which $\gamma$ moves laterally do not contribute to the weight. Let $\Gamma_{t,x}$ be the set of all paths from $(0,0)$ to $(t,x)$. Then 
\begin{equation}
\label{eq:LPP_characterization}
\begin{split}
\eta(t,x) & = \sup_{\gamma \in \Gamma_{t,x}} W(\gamma).
\end{split}
\end{equation}
With this characterization, several useful bounds are immediate. The following is a slightly simpler version of the argument in \cite[Proposition 2.1]{penrose2008growth}. It can be strengthened with Chernoff bounds.

\begin{lemma}[Path bounds]
\label{lemma:path_bounds}
For every $t \geq 0$ and integers $k \geq 1$,
\begin{align}
    \max\set{\Prob(R_t \geq k), \Prob(H_t \geq k)} & \leq \exp(e(2d+1)t - k)
\end{align}
\end{lemma}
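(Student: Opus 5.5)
We bound both events by a union bound over ``space-time chains'' of clock rings and then apply a standard Poisson counting estimate. The starting point is the path characterization \eqref{eq:LPP_characterization}.

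\textbf{Reduction to chains.} On $\set{H_t \geq k}$ there is a site $x$ and a path $\gamma \in \Gamma_{t,x}$ with $W(\gamma) \geq k$. Reading the updates used by $\gamma$ (passed through or jumped at) in time order, we obtain an increasing sequence of clock rings $(t_1,x_1),\dots,(t_m,x_m)$ with $0<t_1<\dots<t_m\le t$. Here $x_1 = 0$ and each $x_{i+1} \in \set{x_i} \cup \cl{N}_{x_i}$. Truncating after the first $k$ such rings gives a chain of length exactly $k$: $k$ ordered rings, each at a site equal to or adjacent to the previous one, starting at the origin.

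On $\set{R_t \geq k}$, some site with $\norm{x}_1 \geq k$ has $\eta(t,x) > -\infty$. By the dynamics, support spreads only through lateral moves at clock rings, and each such move changes the $\ell^1$ norm by at most $1$. The time-ordered chain of rings responsible for reaching $x$ therefore has length at least $k$, and truncating it again gives a chain of length exactly $k$ of the same type. Treating $R_t$ as non-strict at the seed is harmless, since the first spread requires a ring at the origin.

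\textbf{Counting.} The number of admissible site sequences $(x_1,\dots,x_k)$ with $x_1=0$ is at most $(2d+1)^{k-1} \le (2d+1)^k$. For a fixed site sequence, the probability that there exist rings at those sites at increasing times in $[0,t]$ is at most the expected number of such ordered tuples of rings. By independence of the rate-one Poisson clocks, this expectation is at most the volume of the simplex, $t^k/k!$. When the same site repeats, the expected number of ordered tuples of distinct rings is still $t^k/k!$, by the factorial moment formula for Poisson processes. Hence
\[
\Prob(H_t \geq k),\ \Prob(R_t \geq k) \le \frac{((2d+1)t)^k}{k!} \le \left(\frac{e(2d+1)t}{k}\right)^k,
\]
using $k! \ge (k/e)^k$.

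\textbf{Converting to the stated form.} Set $a = e(2d+1)t$. We need $(a/k)^k \le e^{a-k}$, which is equivalent to $k\log(a/k) \le a - k$. Writing $u=a/k$, this becomes $\log u \le u-1$, which always holds. The stated bound follows for all $t \ge 0$ and $k\ge1$.

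The main obstacle is the reduction step: one must verify carefully that a positive-weight path, or a spread of the support, really forces $k$ distinct time-ordered rings on a nearest-neighbour site sequence. In particular, for the weight, the $k$ passed-through updates already supply $k$ rings, and the interleaved jump rings only need to be accounted for by allowing the sequence to use adjacent sites. To avoid double counting, I would define the chain using all rings $\gamma$ meets, which preserves the nearest-neighbour constraint. The rest is routine.
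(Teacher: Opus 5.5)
Your proposal is correct and is essentially the paper's proof: the path representation reduces both events to a time-ordered chain of $k$ distinct rings starting at the origin with $x_{i+1}\in\{x_i\}\cup\mathcal{N}_{x_i}$; you then union bound over at most $(2d+1)^{k-1}$ site sequences, bound each by $t^k/k!$, and convert to $\exp(e(2d+1)t-k)$. The differences are cosmetic: you use a first-moment count of ordered ring tuples where the paper uses a Poisson tail plus factorial-moment bound, and $k!\geq (k/e)^k$ with $\log u\leq u-1$ where the paper uses $a^k/k!\leq e^a$; building the chain from all rings $\gamma$ meets, truncated after the first $k$, is the right way to make the chain claim precise.
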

\begin{proof}
By the pathwise characterization \eqref{eq:LPP_characterization}, $R_t \geq k$ implies the existence of a path ending at $(t,x)$ for some $\norm{x}_1 \geq k$, with any weight. Similarly, the event $H_t \geq k$ implies the existence of a path $\gamma$ ending at $(t,x)$ for some $x$, with weight $W(\gamma)$ at least $k$. Both of these events imply the existence of distinct updates $\set{(s_i,x_i)}_{i=1}^k$ where $x_1 = 0, x_{i+1} \in \set{x_i}\cup\cl{N}_{x_i}$ and $s_1 \leq \dots \leq s_k \leq t$. There are at most $(2d+1)^{k-1}$ sequences of sites $(x_1,\dots,x_k)$ and the probability that all these ring times fall in $[0,t]$ is $\Prob(N \geq k),$ with $N \sim \textsf{Poi}(t)$.
Therefore,
\begin{align*}
    \max\set{\Prob(R_t \geq k), \Prob(H_t \geq k)} & \leq (2d+1)^{k-1}\Prob(N \geq k) \\
    & \leq (2d+1)^{k-1}\E\left[\binom{N}{k}\right] \\
    & \leq \frac{((2d+1)t)^k}{k!} \\
    & \leq e^{-k}\frac{(e(2d+1)t)^k}{k!} \\
    & \leq \exp(e(2d+1)t - k).
\end{align*}
The third line follows from the Poisson factorial moment identity. 
\end{proof}

\begin{figure}[t]
    \centering
    \includegraphics[width = \linewidth]{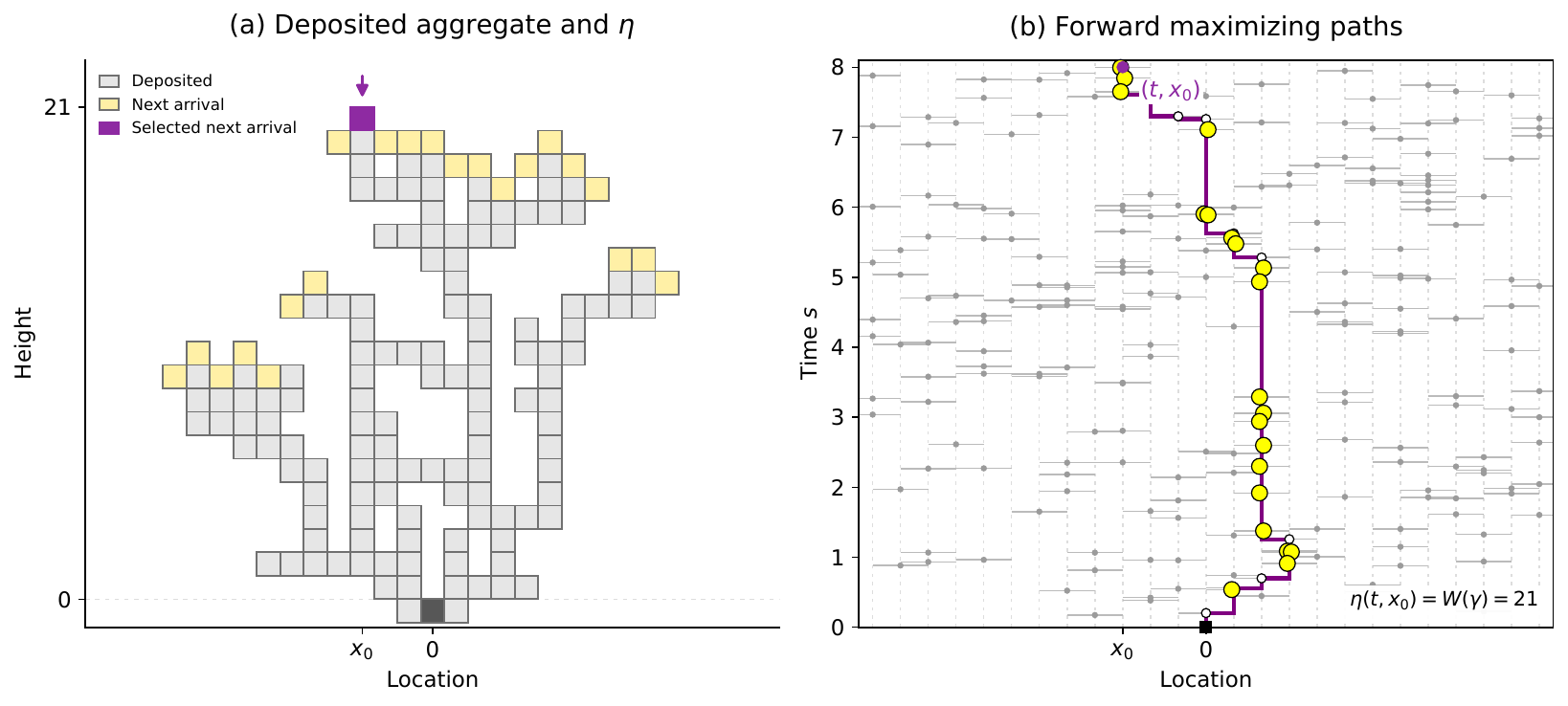}
    \caption{Visualization of the next-arrival process $\eta^0$ until $t = 8$ started from a seed, defined in terms of a ballistic deposition process $\xi^0$, along with the impact of a clock ring at location $x_0$. The grey blocks represent deposited blocks of $\xi^0.$ The right hand side shows a corresponding forward path $\gamma$ ending at $(t,x_0)$ with maximal weight. Filled yellow circles represent clock rings that $\gamma$ passes through. Locations are jittered for visibility. Note that $\eta(t,x_0)$ equals the number of filled yellow circles on $\gamma$.}
    \label{fig:ballistic_deposition_path_representation}
\end{figure}

\section{Hitting-Time Tail Estimates}
\label{sec:hitting_time_tail_estimates}

For $n \geq 1$, define the hitting times 
\begin{equation}
    T_n = \inf\set{t: H_t \geq n}, \qquad \mu_n = \E T_n, \qquad T_0 = \mu_0 = 0.
\end{equation}

With an abuse of notation, let $\mu_n(f)$ denote the expected time next-arrival ballistic deposition takes to reach maximum height at least $n$, started from a profile $f \in (\bb{Z}_{\geq 0} \cup \set{-\infty})^\Lambda$, with at least one site having nonnegative height. Choose a site $x$ with $f(x) \geq 0$. After $n$ clock rings at $x$, the height is at least $n$. So, $\mu_n(f) \leq n$. Because the update maps $U_x$ respect pointwise order of profiles,
\[
\delta_x(f) := \mu_n(f) - \mu_n(U_x f) \geq 0.
\]
So long as $T_n$ has not been reached, conditioning on the site of the first clock ring in $\Lambda$ gives
\[
\mu_n(f) = \frac{1}{|\Lambda|} + \frac{1}{|\Lambda|} \sum_{x \in \Lambda} \mu_n(U_x f).
\]
Therefore,
\begin{equation}
\label{eq:jumps_simplex}
    \sum_{x \in \Lambda} \delta_x(f) = 1, \qquad 0 \leq \delta_x(f) \leq 1.
\end{equation}
 Let $\cl{F}_t$ denote the filtration generated by the clocks up to time $t$. Following \cite[Proof of Lemma 1.1]{aldous2016weak}, consider the remaining-time martingale
\begin{equation}
\begin{split}
M_t & := \E[T_n \mid \cl{F}_t] - \mu_n.
\end{split}
\end{equation}
By the Markov property, $M_t$ is also equal to 
\[
t \wedge T_n + \mu_n(\eta_{t \wedge T_n}) - \mu_n.
\]
For $t \geq T_n$, the martingale is constant. For $t < T_n$, the martingale increases at unit speed between clock rings, with jumps $-\delta_x(\eta_{t^- \wedge T_n})$ upon a clock ring at $(t,x)$. 

Considering exponential transformations of $M_t$ and applying the crucial bound \eqref{eq:jumps_simplex} yields supermartingales which we can use for Chernoff bounds. The resulting bound is very general and holds for essentially any continuous-time surface growth process whose update maps $U_x$ respect pointwise order.

\begin{proposition}[Exponential Concentration of Hitting Times]
\label{prop:exponential_concentration_hitting_times}
For all $r \geq 0$,
\begin{equation}
\label{eq:hitting_time_concentration_upper_tail}
\Prob(T_n \geq \mu_n + r) \leq \exp\set{-\frac{r^2}{2(\mu_n + r)}}.
\end{equation}
For $0 \leq r \leq \mu_n$, we also obtain the lower tail estimate
\begin{equation}
\label{eq:hitting_time_concentration_lower_tail}
\Prob(T_n \leq \mu_n - r) \leq \exp\set{-\frac{r^2}{4\mu_n}}. 
\end{equation}
\end{proposition}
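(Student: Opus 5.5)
We will follow Aldous's argument and work with the remaining-time martingale $M_t$ on the finite box $\Lambda$. Write $\tau_t = t \wedge T_n$. For $t<T_n$, $M_t$ drifts upward at unit speed and, at rate $1$ per site, jumps by $-\delta_x(\eta_{t^-})$, where by \eqref{eq:jumps_simplex} the jumps satisfy $\sum_x \delta_x = 1$ and $0 \le \delta_x \le 1$.

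\textbf{Supermartingales.} For a smooth $\phi$, the generator acting on $\phi(M_t)$ before $T_n$ is
\[
\phi'(M) + \sum_x \bigl[\phi(M-\delta_x) - \phi(M)\bigr].
\]
For $\phi(m) = e^{\lambda m}$ with $\lambda \ge 0$, this equals $e^{\lambda M}\bigl(\lambda + \sum_x (e^{-\lambda\delta_x}-1)\bigr)$. Using $e^{-u} - 1 \le -u + u^2/2$ for $u \ge 0$, together with $\sum_x \delta_x = 1$ and $\sum_x \delta_x^2 \le \max_x \delta_x \cdot \sum_x \delta_x \le 1$, the bracket is at most $\lambda^2/2$. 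Hence
\[
Z_t = \exp\bigl\{\lambda M_t - \lambda^2 \tau_t/2\bigr\}
\]
is a supermartingale.

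For $\phi(m) = e^{-\theta m}$ with $\theta \ge 0$, the bracket becomes $-\theta + \sum_x (e^{\theta\delta_x}-1)$. By convexity of $u \mapsto e^{\theta u}-1$ on $[0,1]$ we have $e^{\theta\delta}-1 \le \delta(e^\theta - 1)$, so the bracket is at most $e^\theta - 1 - \theta$. Therefore
\[
\exp\bigl\{-\theta M_t - (e^\theta - 1 - \theta)\tau_t\bigr\}
\]
is a supermartingale.

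Rigorously, both facts follow from Dynkin's formula for the finite-state-in-$\Lambda$ (up to $T_n$) pure jump process. Integrability is no issue: $M$ is bounded on compact time intervals, since $\mu_n(\cdot) \le n$.

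\textbf{Upper tail.} On $\{T_n \ge \mu_n + r\}$, at time $t_0 = \mu_n + r$ we have $\tau_{t_0} = t_0$ and $M_{t_0} = t_0 + \mu_n(\eta_{t_0}) - \mu_n \ge r$, because $\mu_n(\cdot) \ge 0$. Optional stopping at the fixed time $t_0$ gives
\[
\Prob(T_n \ge \mu_n + r) \le \E Z_{t_0}\, e^{-\lambda r + \lambda^2 t_0/2} \le \exp\bigl\{-\lambda r + \lambda^2(\mu_n + r)/2\bigr\}.
\]
Choosing $\lambda = r/(\mu_n + r)$ yields \eqref{eq:hitting_time_concentration_upper_tail}.

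\textbf{Lower tail.} Stop at $T_n$ itself. Since $T_n \le$ (time of $n$ rings at the origin), $T_n$ is integrable, and since the supermartingale is positive, Fatou's lemma gives $\E \exp\{-\theta M_{T_n} - (e^\theta - 1 - \theta)T_n\} \le 1$. Here $M_{T_n} = T_n - \mu_n$. On $\{T_n \le \mu_n - r\}$ we have $-\theta M_{T_n} \ge \theta r$ and $T_n \le \mu_n$. Hence
\[
\Prob(T_n \le \mu_n - r) \le \exp\bigl\{-\theta r + (e^\theta - 1 - \theta)\mu_n\bigr\}.
\]
Take $\theta = r/(2\mu_n) \le 1/2$ (valid since $r \le \mu_n$), and use $e^\theta - 1 - \theta \le \theta^2$ for $\theta \le 1$. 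This gives the exponent $-r^2/(2\mu_n) + r^2/(4\mu_n) = -r^2/(4\mu_n)$, which is \eqref{eq:hitting_time_concentration_lower_tail}.

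\textbf{Main obstacle.} The main point needing care is justifying the generator computation and optional stopping in this setting. This requires the localization to $\Lambda$, which makes the jump rate finite ($|\Lambda|$), and the observation that $M$ jumps only at rings while drifting deterministically in between. Granting this, the rest is the elementary inequalities above.
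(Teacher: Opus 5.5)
Your proposal is correct and follows essentially the same route as the paper: the same two exponential supermartingales built from the remaining-time martingale, verified through the generator using $\sum_x \delta_x = 1$ and $0 \le \delta_x \le 1$, followed by Chernoff bounds with $\lambda = r/(\mu_n + r)$ and $\theta = r/(2\mu_n)$. The only cosmetic difference is in the lower tail, where you stop at $T_n$ and use Fatou's lemma while the paper applies the Chernoff bound at a fixed time; both give the exponent $-r^2/(4\mu_n)$.
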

\begin{proof}
For any functional of surface heights $F:(\set{-\infty} \cup \bb{N}_{\geq 0})^\Lambda \rightarrow \bb{R}$, the generator $\cl{L}_t$ of the next-arrival process $\eta_{t}$ is given by
\begin{equation}
(\cl{L}_t F)(\eta) = \sum_{x \in \Lambda} \set{F(U_x \eta) - F(\eta)}
\end{equation}
It is a standard fact that if $X_t$ is a continuous-time Markov jump process with generator $\cl{L}_t$ and state space $S$, for any $F: \bR_{t \geq 0} \times S$ which is $C^1$ in time and bounded on finite time intervals,
\begin{equation}
    F(t,X_t) - \int_0^t \left( \partial_s F + \cl{L}_s F\right)(s,X_s) ds, \qquad t \geq 0
\end{equation}
is a martingale. See \cite[\S5.1]{eberle2020markov} for a concrete reference. For any $\lambda \geq 0$, apply this to $\eta_{t}$ with 
\[
F(t,\eta) := \exp\set{\lambda(\mu_n(\eta) - \mu_n) + \left(\lambda - \frac{\lambda^2}{2}\right)t}.
\]
Then, $\partial_s F(s,\eta_s) = (\lambda - \lambda^2/2) F$ and 
\begin{align*}
(\cl{L}_s F_s)(\eta) & = \sum_{x \in \Lambda} \set{\exp(\lambda (\mu_n(U_x \eta) - \mu_n) + (\lambda - \lambda^2/2)s) - F(s,\eta)} \\
& = F(s,\eta)\sum_{x \in \Lambda} \set{\exp(-\lambda \delta_x(\eta)) - 1}.
\end{align*}
For $s \leq t \wedge T_n$, by \eqref{eq:jumps_simplex} and the elementary inequality $e^{-x} - 1 + x \leq x^2/2, \ x \geq 0$, 
\begin{align*}
    (\partial_s F + \cl{L}_s F)(s,\eta_s) & = F(s,\eta_s)\left(\lambda - \lambda^2/2 + \sum_{x \in \Lambda} \set{\exp(-\lambda \delta_x(\eta_s)) - 1} \right) \\
    & = F(s,\eta_s)\left( - \lambda^2/2 + \sum_{x \in \Lambda} \set{\exp(-\lambda \delta_x(\eta_s)) + \lambda \delta_x(\eta_s) - 1} \right) \\
    & \leq F(s,\eta_s)\left(-\lambda^2/2 + \frac{\lambda^2}{2}\sum_{x \in \Lambda} \delta_x(\eta_s)^2  \right) \\
    & \leq 0.
\end{align*}
So $\int_0^{t \wedge T_n} \left( \partial_s F + \cl{L}_s F\right)(s,\eta_s) ds$ is a non-increasing process adapted to $\cl{F}_t$. Therefore, the stopped process $F(t \wedge T_n,\eta_{t \wedge T_n})$ is a supermartingale. Expressing this in terms of $M_t$ and rescaling shows that 
\begin{equation}
\exp\set{\lambda M_t - \frac{\lambda^2}{2}(t \wedge T_n)}
\end{equation}
is also a supermartingale starting at $1$. Therefore,
$\E[e^{\lambda M_t}] \leq e^{\frac{\lambda^2t}{2}}.$ Applying the standard Chernoff bound,
\begin{align*}
    \Prob(T_n \geq \mu_n + r) & \leq \Prob(M_{\mu_n + r} \geq r) \\
    & \leq e^{-\lambda r} \E[e^{\lambda M_{\mu_n + r}}] \\
    & \leq e^{-\lambda r} e^{\lambda^2 (\mu_n + r)/2}.
\end{align*}
Optimizing $\lambda$ gives \eqref{eq:hitting_time_concentration_upper_tail}. For the lower tail, apply the same argument with
\[
G(t,\eta) := \exp\set{-\theta(\mu_n(\eta) - \mu_n) + \left(1 - e^\theta\right)t}
\]
for any $\theta \geq 0.$ We compute
\begin{align*}
    (\partial_s G + \cl{L}_s G)(s,\eta_s) & = G(s,\eta_s)\left( 1 - e^\theta + \sum_{x \in \Lambda}\set{\exp(\theta \delta_x(\eta_s)) - 1} \right) \\
    & = G(s,\eta_s)\left( 1 + \theta - e^\theta + \sum_{x \in \Lambda} \set{\exp(\theta \delta_x(\eta_s)) -\delta_x(\eta_s) \theta - 1} \right).
\end{align*}
By the convexity of $f(x) = e^x - x - 1$ and the fact that $f(0) = 0$, we have $f(\alpha x) \leq \alpha f(x)$ for any $\alpha \in [0,1]$. Then for $s \leq t \wedge T_n$,
\begin{align*}
\sum_{x \in \Lambda} \set{\exp(\theta \delta_x(\eta_s)) -\delta_x(\eta_s) \theta - 1}  & \leq (e^{\theta} - \theta - 1) \sum_{x \in \Lambda} \delta_x(\eta_s) \\
& = (e^{\theta} - \theta - 1).
\end{align*}
By the same argument as in the first part of the proof, $G(t \wedge T_n,\eta_{t \wedge T_n})$ is a supermartingale. In particular, so is
\[
\exp\set{-\theta M_t - (e^\theta - 1 - \theta)(t \wedge T_n)}.
\]
Since $t \wedge T_n \leq t, \E[e^{-\theta M_t}] \leq e^{t(e^\theta - 1 - \theta)} \leq e^{t \theta^2},$ for $\theta \in [0,1]$. Applying the Chernoff bound and setting $\theta = r/(2\mu_n)$ yields \eqref{eq:hitting_time_concentration_lower_tail}.

\end{proof}

\section{Proof of Theorem \ref{thm:linear_variance_bound}}

When inverting the concentration bounds from the previous section, the important model-specific input is to show that the mean differences $\mu_{n+k} - \mu_n$ are not too small. It is known that $\mu_n/n \rightarrow \rho$ for some constant $\rho$, by a subadditivity argument \cite{penrose2008growth}. However, the resulting conclusion $\mu_n = \rho_n + o(n)$ is too coarse. The key is to notice that the stopped next-arrival process $\eta_{T_n}$ is bounded pointwise by $n$, because $H_t$ only increases by $1.$ Therefore, one can speed up the process by immediately jumping all heights in the support to $n$, at time $T_n$. This gives a lower bound on $T_{n+k} - T_n$ in terms of the expected hitting time of level $k$ of a next-arrival process started from a flat initial configuration supported on $S_{T_n}$. There are some resemblances to the proof of superadditivity in \cite{sudijono2025fluctuation}. 

\begin{lemma}[Hitting Time Mean Spacings]
\label{lemma:mean_spacings}
There are constants $C_d \geq 1, c_d > 0$ independent of $\Lambda$ such that with
$L_n := \log(2C_d) + d\log(n+1)$, 
\begin{equation}
\label{eq:mu_spacing_lower_bound}
    \mu_{n+k} - \mu_n \geq c_d \left(k - L_n \right).
\end{equation}
for all integers $n \geq 0,k\geq 1$. Also, $0 \leq \mu_{n+1} - \mu_n \leq 1$.
\end{lemma}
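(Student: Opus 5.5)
We first dispose of the easy bounds $0 \le \mu_{n+1}-\mu_n \le 1$. Monotonicity $T_n \le T_{n+1}$ gives the lower bound. For the upper bound we apply the strong Markov property at $T_n$: $\mu_{n+1} - \mu_n = \E[\mu_{n+1}(\eta_{T_n})]$. At time $T_n$ some site $x$ has $\eta(T_n,x) = n$, so a single ring at $x$ raises $H$ to $n+1$. The waiting time for that ring is exponential with rate $1$. Hence $\mu_{n+1}(\eta_{T_n}) \le 1$. (The same argument as for $\mu_n(f)\le n$ in Section \ref{sec:hitting_time_tail_estimates}, applied to levels relative to $n$.)

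For the spacing bound \eqref{eq:mu_spacing_lower_bound} we follow the speed-up idea described before the lemma. At time $T_n$ the profile satisfies $\eta_{T_n} \le n$ pointwise on the support $S := S_{T_n}$ and equals $-\infty$ off $S$. Define the comparison profile $g = n\mathbf 1_S + (-\infty)\mathbf 1_{S^c}$, so $\eta_{T_n}\le g$. By order preservation of the maps $U_x$ and the strong Markov property,
\[
\mu_{n+k}-\mu_n = \E\big[\mu_{n+k}(\eta_{T_n})\big] \ge \E\big[\mu_{n+k}(g)\big] = \E\big[\nu_k(S_{T_n})\big],
\]
where $\nu_k(A)$ is the expected time for a next-arrival process started flat at height $0$ on $A$ (and $-\infty$ elsewhere) to reach height $k$. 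A flat start on $A$ is a union of $|A|$ seeds. By the path representation \eqref{eq:LPP_characterization}, with paths now allowed to start anywhere in $A$, a union bound over starting points in Lemma \ref{lemma:path_bounds} gives
\[
\Prob(\text{height} \ge k \text{ by time } t) \le |A|\, e^{e(2d+1)t-k}.
\]
Take $t = c_d(k - \log(2|A|))$ with $c_d = 1/(e(2d+1))$. Then this probability is at most $1/2$. So $\nu_k(A) \ge t/2 = \tfrac{c_d}{2}(k-\log(2|A|))$, and we absorb the factor $\tfrac12$ into $c_d$.

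It remains to control $|S_{T_n}|$, which is the main obstacle, since $S_{T_n}$ is random and could in principle be large. We use $|S_{T_n}| \le (2R_{T_n}+1)^d$ and bound $R_{T_n}$ by $n$ up to constants. Heuristically, a path reaching radius $r$ uses $r$ lateral updates, and the LPP structure should force height of order $r$ as well. More robustly, we note that $\log$ is concave, so Jensen gives $\E\log|S_{T_n}| \le d\log(2\E R_{T_n}+1)$. It then suffices to show $\E R_{T_n} \le C(n+1)$. For this we combine $T_n \le$ (hitting time bound), using Proposition \ref{prop:exponential_concentration_hitting_times} together with $\mu_n\le n$, so that $T_n \le 2n + O(\text{tail})$ with exponential tails, and Lemma \ref{lemma:path_bounds} applied to $R_t$: $\Prob(R_{T_n}\ge m) \le \Prob(T_n \ge m/(2e(2d+1))) + \exp(-m/2)$. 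Choosing constants so that the first term is exponentially small once $m \ge C n$ yields $\E R_{T_n} \le C_d(n+1)$. Plugging this into the bound on $\nu_k$ gives $\mu_{n+k}-\mu_n \ge c_d(k - \log(2C_d) - d\log(n+1))$. If this right side is negative the claim is trivial from monotonicity, so the bound holds for all $n,k$, uniformly in $\Lambda$.
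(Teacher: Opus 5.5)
Your proposal is correct and follows essentially the paper's route: dominate $\eta_{T_n}$ by the flat profile at height $n$ on $S_{T_n}$, union-bound over starting sites in the support, and control the support through $R_{T_n}$ using a tail bound on $T_n$ together with Lemma \ref{lemma:path_bounds}. The differences are minor. You average the mean bound $\nu_k(A)\ge \tfrac{c_d}{2}(k-\log(2|A|))$, which holds for every $A$ since $\nu_k\ge 0$, and apply Jensen, so you only need $\E R_{T_n}\le C(n+1)$ rather than the paper's $\E|S_{T_n}|\le C(n+1)^d$; and you get the tail of $T_n$ from Proposition \ref{prop:exponential_concentration_hitting_times} with $\mu_n\le n$ rather than from the comparison $T_n\le\Gamma_n$.
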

\begin{proof}
Throughout this proof, let $C,c$ be constants depending on $d$, but independent of $\Lambda$, that may change line by line. We will first establish the following bound on the size of the support at $T_n$:
\begin{equation}
\label{eq:expected_support_size}
    \E[|S_{T_n}|] \leq C(n+1)^d.
\end{equation}
Let $\kappa := \frac{1}{2(2d+1)e}$. Monotonicity of $R_t$ shows
\begin{align*}
    \Prob(R_{T_n} \geq r) & \leq \Prob(T_n >  \kappa r) + \Prob(R_{\kappa r} \geq r).
\end{align*}
To bound the first term on the right hand side, let $\Gamma_n$ denote the time of the $n$th clock ring at the origin. Note $\Gamma_n \sim \textsf{Gamma}(n,1)$ and $T_n \leq \Gamma_n$. Then $\Prob(T_n > \kappa r) \leq \Prob(\Gamma_n > \kappa r) \leq \E[e^{\Gamma_n/2}]e^{-\kappa r/2} = 2^ne^{-\kappa r / 2}$, where the last equality follows by the formula for the moment generating function of $\textsf{Gamma}(n,1)$. For the second term, apply Lemma \ref{lemma:path_bounds}. Then
\[
\Prob(R_{T_n} \geq r) \leq 2^n e^{-\kappa r / 2} + e^{-r/2}.
\]
For $r \geq C(n+1)$ for $C$ large enough, the right hand side is less than $2e^{-cr}$, for another constant $c.$ Then
\begin{align*}
    \E|S_{T_n}| & \leq \E(2R_{T_n} + 1)^d \\
    & \leq C \left[1 + \sum_{r = 1}^\infty r^{d-1}\Prob(R_{T_n} \geq r) \right] \\
    & \leq C(n+1)^d,
\end{align*}
by splitting the sum into the terms $r \geq C(n+1)$ and $r \leq C(n+1)$. \\

Next, we prove \eqref{eq:mu_spacing_lower_bound}. Consider a process $\tilde \eta_t$ as follows: $\tilde \eta_t$ is equal to $\eta_t$ for all $t < T_n$. At $T_n$, immediately set all heights to $n$ for all sites $x \in S_{T_n}$ in the support. Continue evolving $\tilde \eta_t$ according to the next-arrival rule. Let $\tilde T_{n+k}$ denote the first time $s \geq T_n$ such that $\sup_{x \in \Lambda } \tilde \eta(s,x) = n + k.$ Both processes $\tilde \eta, \eta$ evolve according to the same clocks. Because all sites in $\eta_{T_n}$ have height at most $n$, this coupling forces $\tilde{T}_{n+k} - T_n \leq T_{n+k} - T_n$, so 
\[
\Prob(T_{n+k} - T_n \leq s) \leq \Prob(\tilde T_{n+k} - T_n \leq s)
\]
for all $s \geq 0.$ 

By the strong Markov property, $\tilde{T}_{n+k} - T_n$ has the same law, conditional on $\cl{F}_{T_n}$, as the first time a next-arrival ballistic deposition process has maximum height $k$, started from the initial profile 
\[
\begin{cases}
0 & \ x \in S_{T_n} \\
-\infty & \ \text{otherwise}.
\end{cases}
\]
Note that $\set{\tilde T_{n+k} - T_n \leq s}$ implies the existence of a forwards path starting at some site $x$ in the support $S_{T_n}$, ending at time $s$, with at least $k$ many clock rings.  Following the proof of Lemma \ref{lemma:path_bounds} and taking a union bound,

\[
\Prob(\tilde T_{n+k} - T_n \leq s \mid \cl{F}_{T_n}) \leq |S_{T_n}| e^{(2d+1)es - k}.
\]

By \eqref{eq:expected_support_size},
\begin{align*}
\Prob(\tilde T_{n+k} - T_n \leq s) & \leq \E[|S_{T_n}|] e^{(2d+1)es - k} \\   
& \leq C(n+1)^d e^{(2d+1)es - k}.
\end{align*}

Take $k > L_n$. If not, then monotonicity $\mu_{n+k} \geq \mu_n$ gives \eqref{eq:mu_spacing_lower_bound}. With $L_n = \log(2C) + d\log(n+1)$, taking $s = (k - L_n)/(e(2d+1))$ gives $\Prob(\tilde T_{n+k} - T_n \leq s) \leq 1/2.$ Thus
\[
\mu_{n+k} - \mu_n \geq s\Prob(\tilde T_{n+k} - T_n \geq s) \geq \frac{k - L_n}{2e(2d+1)}.
\]
This establishes \eqref{eq:mu_spacing_lower_bound}. For the last claim, $\mu_{n+1} \geq \mu_n$ holds by monotonicity and $\mu_{n+1} \leq \mu_n + 1$ since it takes expected time $1$ for a clock to ring over a height-maximizing site of $\eta_{T_n}.$    
\end{proof}

\begin{proof}[Proof of Theorem \ref{thm:linear_variance_bound}]
Throughout this proof, again let $C,c$ be constants that may change line by line, which are independent of $\Lambda$. It suffices to prove Equation \eqref{eq:bernstein_type_bound_on_height}, as integrating yields the linear variance bound \eqref{eq:bd_linear_variance}. We claim that for every $t \geq 1$ and $r \geq 0$,
\begin{equation}
\label{eq:concentration_on_H_t}
    \Prob\left(|H_t - \E H_t| \geq r\right) \leq C \exp\set{-c\frac{r^2}{t+r}}.
\end{equation}
The constants do not depend on the box $\Lambda$. For $t \geq 1$, let
\begin{equation}
m := \min\set{n \geq 1 : \mu_n \geq t}.
\end{equation}
We record the following facts. First, $t \leq \mu_m$ by definition and $\mu_m - \mu_{m-1} \leq 1 \Rightarrow \mu_m \leq t + 1$, so $t \leq \mu_m < t+1$. Second, $m \leq Ct$ because of the spacing lower bound in Lemma \ref{lemma:mean_spacings}. Third, $L_m \leq C\log(t+2)$ because of the second fact.

For any integer $r \geq C \log(t+2),$ Lemma \ref{lemma:mean_spacings} shows $\mu_{m+r} - t \geq \mu_{m+r} -\mu_m \geq c(r - L_m) \geq cr/2$. Applying Proposition \ref{prop:exponential_concentration_hitting_times},
\begin{align*}
\Prob(H_t \geq m + r) & = \Prob(T_{m+r} \leq t) \\
                          & \leq \Prob(T_{m+r} - \mu_{m+r} \leq -cr/2) \\
                          & \leq \exp\set{-c\frac{r^2}{\mu_{m+r}}}.
\end{align*}
With $\mu_{m+r} \leq m+r \leq Ct+r,$ we obtain
\[
\Prob(H_t \geq m + r) \leq \exp\set{-c\frac{r^2}{t + r}}.
\]
For the other direction, let $r \leq m$. Using Lemma \ref{lemma:mean_spacings}, $\mu_m - \mu_{m - r + 1} \geq c_d(r-1-L_{m-r+1})$. Since $L_{m - r + 1} \leq L_m$, 
\begin{align*}
t - \mu_{m - r + 1} & = t - \mu_m + \mu_m - \mu_{m - r + 1} \\
                    & \geq -1 + c(r - 1 - L_m) \\
                    & \geq cr/2,
\end{align*}
for $r \geq C\log(t+2)$. Applying \eqref{eq:hitting_time_concentration_upper_tail}, we obtain
\begin{align*}
\Prob(H_t \leq m - r) & = \Prob(T_{m-r+1} > t) \\
                          & \leq \Prob(T_{m-r+1} - \mu_{m-r+1} > cr/2) \\
                          & \leq \exp\set{-c\frac{r^2}{r + \mu_{m-r+1}}} \\
                          & \leq \exp\set{-c\frac{r^2}{r + \mu_{m}}}.
\end{align*}

Combining these two estimates, we obtain
\begin{equation}
    \Prob(|H_t - m| \geq r) \leq C\exp\set{-c\frac{r^2}{t+r}},
\end{equation}
for all integer $r \geq C \log (t+2).$ The same bound for all $r \geq 0$ holds upon enlarging $C$. Integration by parts shows $\E|H_t - m| \leq C\sqrt{t}$. Combining this with the triangle inequality gives \eqref{eq:concentration_on_H_t}, upon potentially changing $C,c$. \\

Next, we pass to infinite volume. Let $\Lambda_L=[-L,L]^d\cap\bb{Z}^d$, and couple the processes $\eta_{t}^{\bb{Z}^d}$ and $\set{\eta^{\Lambda_L}}_{L \geq 1}$ using the same clocks, extending their profiles by $-\infty$ outside $\Lambda_L$. Clearly, the surface heights $\eta_t^{\Lambda_L}$, supports $S_t^{\Lambda_L}$, and maxima $H_t^{\Lambda_L}$ increase with $L$. By Lemma \ref{lemma:path_bounds}, 
\[
\Prob(R_t^{\bb{Z}^d} \geq r)\leq\exp\set{e(2d+1)t-r}.
\]
On $\set{R_t^{\bb{Z}^d} < L}$, all clock rings up to time $t$ which update the surface height occur in $\Lambda_L$. Therefore,
\begin{equation}
\label{eq:infinite_volume_coupling}
\Prob\left(\eta^{\Lambda_L}_s \neq \eta_s^{\bb{Z}^d} \text{ for some }s\leq t\right)
\leq\exp\set{e(2d+1)t-L}.
\end{equation}

In particular, $H_t^{\Lambda_L} \rightarrow H_t^{\bb{Z}^d}$ almost surely. Monotone convergence implies $\E H_t^{\Lambda_L} \rightarrow \E H_t^{\bb{Z}^d}$. Let $B := \set{\exists s \leq t : \eta^{\Lambda_L}_s \neq \eta_s^{\bb{Z}^d}}$. Take $L$ large enough so that $\Prob(B) \leq \e, \E H_t^{\bb{Z}^d} - \E H_t^{\Lambda_L} \leq \e$. Then 
\begin{align*}
    \Prob\left( \left| H_t^{\bb{Z}^d} - \E H_t^{\bb{Z}^d}\right| \geq r \right) & \leq \Prob\left( \left| H_t^{\bb{Z}^d} - \E H_t^{\Lambda_L} \right| \geq r - \e \right) \\
    & \leq \e + C\exp\set{-c\frac{(r-\e)^2}{t + r - \e}}.
\end{align*}
Taking $\e \downarrow 0$ establishes \eqref{eq:concentration_on_H_t} when $\Lambda = \bb{Z}^d$. Finally, using the duality \eqref{eq:duality_reexpressed} gives the result.

\end{proof}

\noindent \textbf{Acknowledgements.} This article was prepared with assistance from ChatGPT 6, which suggested several core ideas in the proof. The author wrote the exposition and prose. We take responsibility for all errors.

\bibliographystyle{unsrt}
\bibliography{main}

\end{document}